\documentclass[times,authoryear,12pt]{elsarticle}
\date{\today}
\usepackage{natbib}
\usepackage{geometry,txfonts}
\usepackage[labelfont=bf]{caption}
\usepackage{amsmath,amsfonts,amssymb,amsthm,booktabs,color,epsfig,graphicx,url}
\urldef\myurl\url{www.ualberta.ca/~dwiens/}
\usepackage{appendix}
\usepackage[utf8]{inputenc}
\usepackage[english]{babel}
\pagestyle{myheadings}
\markboth{Douglas P. Wiens}{Minimax Robustness of Designs}

\newtheorem{lemma}{Lemma}
\newtheorem{remark}{Remark}

 \makeatletter
\def\ps@pprintTitle{%
  \let\@oddhead\@empty
  \let\@evenhead\@empty
  \def\@oddfoot{\reset@font\hfil\thepage\hfil}
  \let\@evenfoot\@oddfoot
}
\makeatother
 
\journal{U. Alberta preprint series}
\def\T{{ \mathrm{\scriptscriptstyle T} }}

\begin{document}
\begin{frontmatter}
\title{A Note on Minimax Robustness of Designs Against \linebreak Correlated or Heteroscedastic Responses}

\author[A1]{Douglas P. Wiens\corref{mycorrespondingauthor}}

\address[A1]{Mathematical \& Statistical Sciences,
	University of Alberta,
	Edmonton, Canada,  T6G 2G1
 \newline \newline \today}

\cortext[mycorrespondingauthor]{E-mail: \url{doug.wiens@ualberta.ca}.}

\begin{abstract}
We present a result according to which certain functions of covariance
matrices are maximized at scalar multiples of the identity matrix. This is used to
show that experimental designs that are optimal under an assumption of
independent, homoscedastic responses can be minimax robust, in broad classes of alternate covariance structures. In particular it can justify the common practice of disregarding possible dependence, or heteroscedasticity, at the design stage of an experiment.
\end{abstract}
\begin{keyword} %alphabetical order
Correlation \sep 
Covariance \sep 
Induced matrix norm \sep
Loewner ordering \sep
Minimax \sep
Robustness.
%\MSC[2010] Primary 62K05 \sep Secondary 62G35
\end{keyword}
\end{frontmatter}

\section{Introduction}
Experimental designs are typically derived, or chosen, assuming that the
responses will be independent and homoscedastic. As well as being simple,
this is almost necessary, unless an alternate covariance structure is somehow
known. This is frequently a complicating feature of design theory -- until a
design is constructed and implemented there are no data which can be used to
estimate the model. There is some consolation however if a
proposed design, optimal under an assumption of independence and 
homoscedasticity, is \textit{minimax} against a broad class of
alternate structures. By this we mean that the maximum loss in such a class is minimized by this design, which is thus robust against these alternatives. In this note we establish that any function of covariance matrices, possessing a natural monotonicity property, is maximized within such classes at a scalar multiple of the identity matrix. This can be paraphrased by
saying that the `least favourable' covariance structure is that of
independence and homoscedasticity. These are eventualities for which the proposed design is optimal; hence it is minimax.

\section{Main result\label{sec: main}}

Suppose that $\left\Vert \cdot \right\Vert _{M}$ is a matrix norm, induced by the vector norm $\left\Vert \cdot
\right\Vert _{V}$, i.e. 
\begin{equation}
\left\Vert C\right\Vert _{M}=\sup_{\left\Vert {x}%
\right\Vert _{V}\text{ }=1}\left\Vert{Cx}\right\Vert _{V}.
\label{norm}
\end{equation}%
We use the subscript `$M$' when referring to an arbitrary matrix norm, but
adopt special notation in the following cases:

\noindent(i) For the Euclidean norm $\left\Vert {x}\right\Vert _{V}=({{x}%
^{\T}{x}})^{1/2}$, the matrix norm is denoted $\left\Vert C\right\Vert _{E}$ and is
the spectral radius, i.e. the root of the maximum eigenvalue of  $C^{\T}C$. This is the maximum eigenvalue of $C$ if $C$ is a covariance matrix, i.e. is symmetric and positive semidefinite.

\noindent(ii) For the sup norm $\left\Vert {x}\right\Vert _{V}=\max_{i}\left\vert x_{i}\right\vert $, the matrix norm  $\left\Vert C\right\Vert _{\infty}$ is $\max_{i}\sum_{j} \left\vert c_{ij}\right\vert $, the maximum absolute row sum.

\noindent(iii) For the 1-norm $\left\Vert {x}\right\Vert _{V} = \sum_{i} \left\vert x_{i}\right\vert$, 
the matrix norm $\left\Vert C\right\Vert _{1}$ 
is $\max_{j}\sum_{i}\left\vert c_{ij}\right\vert $, the maximum absolute column sum. For symmetric matrices, $%
\left\Vert C\right\Vert _{1}=\left\Vert C\right\Vert _{\infty }$.

\noindent See Todd (1977, Chapter 3) for verifications of (i) -- (iii). 

We require the following properties\ of induced norms:

\noindent (P1) $\left\Vert I\right\Vert _{M}=1$.

\noindent (P2) For covariance matrices $C$, $\left\Vert C%
\right\Vert _{M}\geq \left\Vert C\right\Vert _{E}$.

Property P1 is immediate from (\ref{norm}). For P2, suppose that $\left\Vert C \right\Vert _{M}$ is induced by $\left\Vert \cdot \right\Vert _{V}$, and that $\lambda _{0}$ is the maximum eigenvalue of $C$, with
eigenvector $x_{0}$ normalized so that $\left\Vert x_{0}\right\Vert _{V}=1$. Then 
\begin{equation*}
\left\Vert C\right\Vert _{E}=\lambda _{0}=\left\Vert \lambda
_{0}x_{0}\right\Vert _{V}=\left\Vert Cx_{0}\right\Vert _{V}\leq
\sup_{\left\Vert x\right\Vert _{V}=1}\left\Vert Cx\right\Vert
_{V}=\left\Vert C\right\Vert _{M}.
\end{equation*}

Now suppose that the loss function in a statistical problem is $%
\mathcal{L}\left( C\right) $, where $C$ is an $%
N\times N$ covariance matrix and $\mathcal{L}\left( \mathbf{\cdot }\right) $
is non-decreasing in the Loewner ordering: 
\begin{equation}
A\preceq B\Rightarrow \mathcal{L}\left( 
A\right) \leq \mathcal{L}\left( B\right) .
\label{Loewner}
\end{equation}%
Here $A\preceq B$ means that $B-%
A\succeq 0$, i.e.\ is positive semidefinite.

\begin{lemma}For $\eta ^{2}>0$, covariance matrix $C$ and induced norm $%
\left\Vert C\right\Vert _{M}$, define
\begin{equation*}
\mathcal{C}_M =\{ C \mid C\succeq 
0\text{ and }\left\Vert C\right\Vert _{M}\leq \eta
^{2}\} .
%\label{C_M}
\end{equation*}%
For the norm $\left\Vert \mathbf{\cdot }\right\Vert _{E}$ an equivalent
definition is%
\begin{equation}
\mathcal{C}_E =\{ C \mid 0\preceq 
C\preceq \eta ^{2}I_{N}\} .
\label{C_E}
\end{equation}%
Then:

\noindent (i) In any such class $\mathcal{C}_M$, $\max_{\mathcal{C}_M}\mathcal{L}
\left( C \right) =\mathcal{L}\left( \eta ^{2}I_{N}\right) $.

\noindent (ii) If $\mathcal {C^\prime }$ $\mathcal{\subseteq C}_{M}$ and 
$\eta^{2}I_{N}\in \mathcal {C^\prime }$, then 
$\max_{\mathcal {C^\prime }}\mathcal{L}
\left( C \right) =\mathcal{L}\left( \eta ^{2}I_{N}\right) $.
\end{lemma}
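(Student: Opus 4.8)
The plan is to reduce every norm constraint to a Loewner-order bound and then invoke the monotonicity \eqref{Loewner}, so that $\eta^2 I_N$ dominates every competitor in the Loewner sense.

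First I would dispose of the claimed equivalent description \eqref{C_E}. Since $\left\Vert C\right\Vert_E$ is the largest eigenvalue of a covariance matrix $C$, the condition $\left\Vert C\right\Vert_E\le\eta^2$ says exactly that every eigenvalue of $C$ is at most $\eta^2$, i.e. that $\eta^2 I_N-C\succeq 0$; together with $C\succeq 0$ this is $0\preceq C\preceq\eta^2 I_N$, so $\mathcal{C}_E$ as defined through the norm coincides with \eqref{C_E}.

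For part (i), two inclusions give the two required inequalities. On one side, $\eta^2 I_N\succeq 0$ and, by (P1), $\left\Vert\eta^2 I_N\right\Vert_M=\eta^2\left\Vert I_N\right\Vert_M=\eta^2$, so $\eta^2 I_N\in\mathcal{C}_M$ and hence $\mathcal{L}(\eta^2 I_N)\le\max_{\mathcal{C}_M}\mathcal{L}(C)$; in particular the maximum is attained, so writing $\max$ rather than $\sup$ is legitimate. On the other side, take any $C\in\mathcal{C}_M$. By (P2), $\left\Vert C\right\Vert_E\le\left\Vert C\right\Vert_M\le\eta^2$, so the largest eigenvalue of $C$ is at most $\eta^2$, whence $C\preceq\eta^2 I_N$. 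Monotonicity in the Loewner ordering then gives $\mathcal{L}(C)\le\mathcal{L}(\eta^2 I_N)$, and taking the supremum over $C\in\mathcal{C}_M$ yields $\max_{\mathcal{C}_M}\mathcal{L}(C)\le\mathcal{L}(\eta^2 I_N)$. Combining the two inequalities proves (i). Part (ii) is then immediate: since $\eta^2 I_N\in\mathcal{C}'$ we have $\mathcal{L}(\eta^2 I_N)\le\max_{\mathcal{C}'}\mathcal{L}(C)$, while $\mathcal{C}'\subseteq\mathcal{C}_M$ gives $\max_{\mathcal{C}'}\mathcal{L}(C)\le\max_{\mathcal{C}_M}\mathcal{L}(C)=\mathcal{L}(\eta^2 I_N)$ by part (i).

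I do not anticipate a genuine obstacle here: the whole argument hinges on (P2), which converts an arbitrary induced-norm ball into a subset of the spectral-norm ball $\{C:0\preceq C\preceq\eta^2 I_N\}$, after which the Loewner monotonicity does the rest. The only point requiring a little care is the verification that the maximum — as opposed to a mere supremum — is attained, but that is handled by exhibiting $\eta^2 I_N$ as an explicit member of the relevant class and noting it is the maximizer.
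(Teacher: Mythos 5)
Your proof is correct and follows essentially the same route as the paper: P1 to place $\eta^{2}I_{N}$ in $\mathcal{C}_{M}$, P2 to show $\mathcal{C}_{M}\subseteq\mathcal{C}_{E}=\{C\mid 0\preceq C\preceq\eta^{2}I_{N}\}$, and the Loewner monotonicity \eqref{Loewner} to conclude that $\eta^{2}I_{N}$ dominates every competitor. The only cosmetic difference is that you argue via two explicit inequalities while the paper phrases it as ``the maximizer over the larger class $\mathcal{C}_{E}$ lies in the smaller class,'' and you verify the eigenvalue equivalence directly rather than citing Weyl's Monotonicity Theorem.
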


\begin{proof} We first establish the equivalence of (\ref{C_E}). If the condition there holds, then by Weyl's Monotonicity Theorem (Bhatia 1997, p. 63), all eigenvalues of $C$ are dominated by $\eta^2$, hence $\left\Vert C\right\Vert _{E}\leq \eta^{2}$. Conversely, if $\left\Vert C\right\Vert _{E}\leq \eta ^{2}$ then all eigenvalues of $C$ are dominated by $\eta^2$, hence all those of $\eta ^2 I_N - C$ are non-negative and so the condition in (\ref{C_E}) holds. 

By (\ref{Loewner}) and (\ref{C_E}),
$\max_{\mathcal{C}_E}\mathcal{L}\left( C \right) =\mathcal{L}\left( \eta ^{2}I_{N}\right) $.
Then by property P1 followed by P2, $\eta ^{2}I%
_{N}\in \mathcal C_{M}$ $\mathcal{\subseteq }$ $\mathcal C_{E}$, and so
the maximizer in the larger class is a member of the smaller class, hence 
\textit{a fortiori} the maximizer there. This proves (i). The proof of (ii) uses the same
idea -- the maximizer in the larger class $\mathcal C_{M}$ is a member of
the smaller class $\mathcal C^{\prime }$.
\end{proof}

\begin{remark}An interpretation of Lemma 1 is as follows.
Suppose that one has derived a technique under an assumption of
uncorrelated, homoscedastic errors, i.e.\ a covariance matrix $\sigma ^{2}%
I_{N}$, which is optimal in the sense of minimizing $\mathcal{L}
$, for any $\sigma ^{2}>0$. Now suppose one is concerned that the covariance
matrix might instead be a member $C$ of $\mathcal C_{M}$, and
that $\mathcal{L}$ is monotonic in the sense described above. Then the
technique minimizes $\max_{C_{M}}\mathcal{L}%
\left( C\right) =\mathcal{L}\left( \eta ^{2}I%
_{N}\right) $, i.e.\ is minimax in $\mathcal C_{M}$. 
\end{remark}

\begin{remark}In Remark 1 we implicitly assume that $\eta
^{2}\geq \sigma ^{2}$, else $\mathcal C_{M}$ does not contain $\sigma ^{2}%
I_{N}$. \ An argument for taking $\eta ^{2}>\sigma ^{2}$ arises
if one assumes homoscedasticity and writes $C=\sigma ^{2}%
P$, where $P$ is a correlation matrix. Then in $%
\mathcal C_{1}$, $\eta ^{2}\geq \left\Vert C\right\Vert _{1}$ $%
=\sigma ^{2}\left\Vert P\right\Vert _{1}\geq \sigma ^{2}$, with
the final inequality being an equality iff $P=I%
_{N} $. Thus take $\eta ^{2}>\sigma ^{2}$. Then an intuitive explanation of
the lemma is that in determining a least favourable covariance structure,
one can alter the correlations in some manner which increases $\left\Vert 
C\right\Vert _{M}$, or one can merely increase the variances.
The answer is that one should always just increase the variances.
\end{remark}

\begin{remark}A version of Lemma 1 was used by Wiens and Zhou
(2008) in a maximization problem related to the planning of field
experiments. It was rediscovered by Welsh and Wiens (2013) in a study of
model-based sampling procedures. This note seems to be the first systematic
study of the design implications of the lemma.
\end{remark}

\section{Applications\label{sec: examples}}

\subsection{Experimental design in the linear model\label{sec: design}}

Consider the linear model $y=X\theta+%
\varepsilon$. Suppose that the random errors $\varepsilon$ have covariance
matrix $C \in \mathcal C_{M}$. If $C$ is known then the
`Best Linear Unbiased Estimate' is $\hat{\theta}_{\text{\textsc{%
blue}}}=$ $\left( X^{\T }C^{-1}X%
\right) ^{-1}X^{\T }C^{-1}y$ and
there is an extensive design literature - see Dette et al. (2015) for a review. In the more common case that the
covariances are only vaguely known, or perhaps only suspected, it is more
usual to use the ordinary least squares estimate $\hat{\theta}_{%
\text{\textsc{ols}}}$, design as though the errors are uncorrelated, and
hope for the best. An implication of the results of this section is that, in
a minimax sense, that approach can be sensible.

In the classical `alphabetic' design problems, one seeks to minimize a
function $\Phi $ of the covariance matrix of the regression estimates. Let $%
\xi _{0}$ be the minimizing design, under the possibly erroneous assumption of uncorrelated, homoscedastic errors. Assume\ that under $\xi _{0}\ $the moment matrix $%
X^{\T}X$ is non-singular. Then the covariance
matrix of $\hat{\theta}_{\text{\textsc{ols}}}$ is 
\begin{equation}
\textsc{cov}( \hat{\theta}_{\textsc{ols}} \mid C)
=\left( X^{\T }X\right) ^{-1}X%
^{\T }CX\left( X^{\T}X%
\right) ^{-1}.
\label{cov}
\end{equation}
Suppose that $0\preceq C_{1}\preceq C_{2}$%
, so that $C_{2}-C_{1}=A^{\T }A$ for some $A$. Then
\begin{equation*} 
\textsc{cov}( \hat{\theta}_{\textsc{ols}} \mid C_{2}) -  \textsc{cov}( \hat{\theta}_{\textsc{ols}} \mid C_{1}) = B^{\T}B \succeq 0,
\end{equation*}
for $B$ $=AX\left( %
X^{\T }X\right) ^{-1}$, hence 
$
\textsc{cov}( \hat{\theta}_{\textsc{ols}} \mid C_{1}) \preceq \textsc{cov}( \hat{\theta}_{\textsc{ols}} \mid C_{2}) .
$
It follows that if $\Phi $ is non-decreasing in the Loewner ordering, then 
$
\mathcal{L}( C) =\Phi \{ \textsc{cov}( \hat{\theta}_{\textsc{ols}} \mid C)\}
$
is also non-decreasing and the conclusions of the lemma hold. Then as at
Remark 1, $\xi _{0}$ is a minimax design -- it minimizes the maximum loss as
the covariance structure varies over $\mathcal C_{M}$.

Again by Weyl's Monotonicity Theorem, if $0\preceq \Sigma_{1}\preceq 
\Sigma_{2}$ then the $i$th largest eigenvalue $\lambda
_{i} $ of $\Sigma_{2}$ dominates that of $\Sigma%
_{1}$, for all $i$. It follows that $\Phi $ is non-decreasing in the Loewner
ordering if:

\noindent (i) $\Phi \left( \Sigma\right) =tr\left( \Sigma\right) =\sum_{i} \lambda _{i}\left( \Sigma\right) $,
corresponding to `A-optimality';

\noindent (ii) $\Phi \left( \Sigma\right) =det\left( \Sigma\right) =\prod_{i} \lambda _{i}\left( \Sigma\right) $, corresponding to `D-optimality';

\noindent (iii) $\Phi \left( \Sigma\right) =\max_{i}\lambda
_{i}\left( \Sigma\right) $, corresponding to `E-optimality';

\noindent (iv) $\Phi \left( \Sigma\right) =tr\left( 
L\Sigma \right) $ for $L\succeq 0$, corresponding to `L-optimality' and including `I-optimality' --
minimizing the integrated variance of the predictions. Thus the designs optimal under any of these criteria are minimax.

\begin{subsubsection}{\textsc{ma(1)} (moving average of order 1) errors.}

As a particular case, assume first that the random errors are
homoscedastic but are possibly serially correlated, following an \textsc{ma(1)} model with
\textsc{corr}$(\varepsilon _{i},\varepsilon _{j}) =\rho I\left( \left\vert i-j\right\vert =1\right) $ and with $\left\vert \rho \right\vert \leq \rho
_{\max }$. Then under this structure $C$ varies
over the subclass $\mathcal C^{\prime }$ of $\mathcal C_{\infty }$ defined
by $c_{ij}=0$ if $\left\vert i-j\right\vert >1$ and $\left\Vert {C}\right\Vert _{\infty }\leq \sigma ^{2}\left( 1+2\rho _{\max }\right) $, which we define to be $\eta ^{2}$. Since $\eta ^{2}I_{N} \in \mathcal C^{\prime }$, part (ii) of the lemma applies and yields that  $\xi _{0}$ is a minimax design in $\mathcal{C}^{\prime }$ and with respect to any of the alphabetic
criteria above. If the errors are instead heteroscedastic, then $\sigma^2$ is replaced by the maximum of the variances.
\end{subsubsection}

\begin{subsubsection} {\textsc{ar(1)} (autoregressive of order 1) errors.}

It is known that the eigenvalues of an \textsc{ar(1)} autocorrelation matrix with
autocorrelation parameter $\rho $ are bounded, and that the maximum
eigenvalue $\lambda \left( \rho \right) $ has $\lambda^{\ast }=$ $%
\max_{\rho }\lambda\left( \rho \right) >\lambda \left( 0\right) =1$%
. See for instance Trench (1999, p.\ 182). Then, again under
homoscedasticity, the covariance matrix $C$ has $%
\left\Vert C\right\Vert _{E}\leq \sigma ^{2}\lambda ^{\ast }$, and a design optimal when $\rho =0$ is minimax
in the subclass of $\mathcal C_{E}$ defined by the autocorrelation structure and 
$\eta ^{2} = \sigma ^{2}\lambda ^{\ast} $.
\end{subsubsection}

\subsection{Designs robust against model misspecifications}

Working in finite design spaces $\chi $ and with $p$-dimensional regressors $%
f\left( x\right) $, Wiens (2018) derived minimax
designs for possibly misspecified regression models 
\begin{equation}
Y\left( x\right) =f^{\T }\left( x%
\right) \theta+\psi \left( x\right) +\varepsilon ,
\label{model}
\end{equation}%
with the unknown contaminant $\psi $ ranging over a class $\Psi $ and
satisfying, for identifiability of $\theta$, the orthogonality condition 
\begin{equation}
\sum_{x\in \chi }f\left( x\right)
\psi \left( x\right) =0_{p\times 1}.  \label{orthogonality}
\end{equation}%
For designs $\xi$ placing mass $\xi _{i}$ on $x%
_{i}\in \chi $, he took  $\hat{\theta}=\hat{\theta}_{\text{%
\textsc{ols}}}$, 
\begin{subequations}
\label{loss functions}
\begin{eqnarray}
\mathcal{I}\left( \psi ,\xi\right) &=&\sum_{x\in \chi }
E[f^{\T }\left(x\right) \hat{\theta}-E\{Y\left( x\right) \}] ^{2},  \label{I-loss} \\
\mathcal{D}\left(\psi ,\xi\right) &=&[ \det E \{
(\hat{\theta}-\theta) ( 
\hat{\theta}-\theta) ^{\T}\}
] ^{1/p},  \label{D-loss}
\end{eqnarray}%
and found designs minimizing the maximum, over $\psi $, of these loss
functions. The random errors $\varepsilon _{i}$ were assumed to be
i.i.d.; now suppose that they instead have covariance matrix $C%
\in \mathcal C_{M}$.

Consider first (\ref{I-loss}). Using (\ref{orthogonality}), and with $%
d_{\psi }=\{ E( \hat{\theta}) -%
\theta\} $, which does not depend on the covariance
structure, this decomposes as 
\end{subequations}
\begin{equation}
\mathcal{I}\left( \psi ,\xi,C\right) =\sum_{%
x\in \chi }f^{\T}\left( x\right) 
{\textsc{cov}}(\hat{\theta} \mid C) f%
\left( x\right) +\sum_{x\in \chi }\left\{ 
f^{\T }\left( x\right) d_{\psi }%
d_{\psi }^{\T }f\left( x\right)
+\psi ^{2}\left( x\right) \right\} .  \label{I(C)}
\end{equation}%
The first sum above does not depend on $\psi $; the second depends on $\psi $
but not on the covariance structure. Then an extended minimax problem is to
find designs $\xi$ minimizing 
\begin{eqnarray*}
\max_{\psi ,C}\mathcal{I}\left( \psi ,\xi,%
C\right)  = \max_{C\in \mathcal C_{M}}\sum_{%
x\in \chi }f^{\T }\left( x\right) 
\text{\textsc{cov}}( \hat{\theta} \mid C) f%
\left( x\right) + \max_{\psi \in \Psi }\sum_{x\in \chi }\left\{ f%
^{\T}\left( x\right) d_{\psi }d%
_{\psi }^{\T }f\left( x\right) +\psi
^{2}\left( x\right) \right\} .
\end{eqnarray*}%
As in \S \ref{sec: design}, and taking $L=\sum_{x%
\in \chi }f\left( x\right) f^{\T
}\left( x\right) $ in (iv) of that section, for $C%
\in \mathcal C_{M}$ the first sum is maximized by a multiple $\eta ^{2}$ of
the identity matrix, and then the remainder of the minimax problem is that
which was solved in Wiens (2018). The minimax designs, termed `I-robust'
designs, obtained there do not depend on the value of $\eta ^{2}$, and so
enjoy the extended property of minimizing $\max_{\psi ,C}%
\mathcal{I}\left( \psi ,\xi,C\right) $ for $%
C\in \mathcal C_{M}$.

Now consider (\ref{D-loss}). The analogue of (\ref{I(C)}) is 
\begin{eqnarray*}
\mathcal{D}\left( \psi ,\xi,C\right) =[ \det
\{ \text{\textsc{cov}}( \hat{\theta} \mid C) +%
d_{\psi }d_{\psi }^{\T }\} ]
^{1/p}.
\end{eqnarray*}
Since \textsc{cov}$( \hat{\theta} \mid C ) +d%
_{\psi }d_{\psi }^{\T}$, hence its determinant, is
non-decreasing in the Loewner ordering, $\mathcal{D}\left( \psi ,\xi,C\right) $ is maximized, for $C\in \mathcal{C%
}_{M}$, by a multiple of the identity matrix. The rest of the argument is
identical to that in the preceding paragraph, and so the `D-robust' designs
obtained in Wiens (2018) also minimize $\max_{\psi ,C}\mathcal{D%
}\left( \psi ,\xi,C\right) $ for $C%
\in \mathcal C_{M}$.

\begin{remark}Results in the same vein as those above have
been obtained in cases which do not seem to be covered by Lemma 1. For
instance Wiens and Zhou (1996) sought minimax designs for the
misspecification model (\ref{model}), under conditions on the spectral
density of the error process. They state that \textquotedblleft {...
a design which is asymptotically (minimax) optimal for uncorrelated errors
retains its optimality under autocorrelation if the design points are a
random sample, or a random permutation, of points ...}\textquotedblright ,\
with details in their Theorems 2.4 and 2.5.
\end{remark}

\subsection{Designs for nonlinear regression models}
In the nonlinear regression model $y=f\left( x;\theta \right) +\varepsilon $, 
the goal of a design is typically the minimization of some function of the
covariance matrix of $\hat{\theta}$, after a linearization of the model. When the errors have
covariance $C$ the target of this minimization continues to be given by the
right hand side of (\ref{cov}), but with the matrix $X$ replaced by the gradient $%
F\left( \theta \right) =\{ \partial f\left( x_{i};\theta \right)
/\partial \theta _{j}\}_{i,j} $, and with $F\left( \theta \right) $ evaluated
at an initial parameter estimate, or at a previous estimate in an
iterative estimation scheme. Then the results of \S \ref{sec: design} continue to hold, and 
a design optimal for the loss functions given there, under independence and homoscedasticity, is minimax. A
caveat, however, is that optimal design theory for nonlinear models is much
less well-developed, and more model specific, than that for linear models.
See Bates and Watts (1988) and Hamilton and Watts (1985) for discussions. 

\section*{Acknowledgements}
This work was carried out with the support of the Natural Sciences and
Engineering Research Council of Canada. It has benefited from the helpful comments of two anonymous referees.

%\bibliographystyle{biometrika}
%\bibliography{paper-ref}

\end{document}